\newtheorem{theorem}{Theorem}[section]
\theoremstyle{definition}
\newtheorem{remark}[theorem]{Remark}
\numberwithin{equation}{section}
\numberwithin{figure}{section}
\newcommand\Oscr{\mathscr{O}}
\newcommand\B{\mathbb{B}}
\newcommand\C{\mathbb{C}}
\newcommand\R{\mathbb{R}}
\newcommand\igot{\mathfrak{i}}
\renewcommand\igot{\mathfrak{i}}
\renewcommand\imath{\igot}
\newcommand\di{\partial}
\renewcommand\div{\mathrm{div}}
\newcommand\Aut{\mathrm{Aut}}
\newcommand\Id{\mathrm{Id}}
\def\Ell1{\mathrm{Ell_1}}
\def\DEll1{\mathrm{DEll_1}}
\numberwithin{equation}{section}
\begin{document}
\title{Approximation of biholomorphic maps between Runge domains 
by holomorphic automorphisms
}
\author{Franc Forstneri\v{c}}

\address{Franc Forstneri\v c, Faculty of Mathematics and Physics, University of Ljubljana, and Institute of Mathematics, Physics, and Mechanics, Jadranska 19, 1000 Ljubljana, Slovenia}
\email{franc.forstneric@fmf.uni-lj.si}

\subjclass[2020]{Primary 32E30.  Secondary 14R10, 32M17}

\date{21 June 2025}

\keywords{Runge domain, holomorphic automorphism, Stein manifold,
density property}

\begin{abstract}
We show that biholomorphic maps between certain pairs
of Runge domains in the complex affine space $\C^n$, $n>1$, 
are limits of holomorphic automorphisms of $\C^n$. A similar result
holds for volume preserving maps and also 
in Stein manifolds with the density property. This generalises 
several results in the literature with considerably simpler proofs.
\end{abstract}

\maketitle

%\setcounter{tocdepth}{1}
%\tableofcontents

%
%
%    INTRODUCTION
%
%
\section{Introduction}\label{sec:intro} 

A holomorphic vector field $V$ on the complex Euclidean space $\C^n$
is said to be complete if its flow $\phi_t(z)$, solving the initial
value problem 
\[
	\frac{d}{dt}\phi_t(z)=V(\phi_t(z)), \quad \phi_0(z)=z\in \C^n,
\]
exists for every $z\in \C^n$ and $t\in\R$. Such a vector field $V$ is also 
complete in complex time $t\in\C$ 
(see \cite[Corollary 2.2]{Forstneric1996MZ}), 
and $\{\phi_t\}_{t\in\C}$ is a complex 1-parameter subgroup 
of the holomorphic automorphism group $\Aut(\C^n)$ of $\C^n$. 
The same conclusion holds if $V$ is assumed to be 
complete in positive real time;
see Ahern, Flores, and Rosay \cite{AhernFloresRosay2000}.

Let $\B(0,\epsilon)$ denote the ball of radius $\epsilon$ around 
the origin $0\in\C^n$. We say that $0$ is a 
{\em globally attracting fixed point} of $V$ if $V(0)=0$ 
and the following two conditions hold:
\begin{enumerate}
\item $\lim_{t\to+\infty}\phi_t(z) = 0$ holds for all $z\in\C^n$. 
\item For every $\epsilon>0$ there exists a $\delta>0$ such that 
$\phi_t(z)\in \B(0,\epsilon)$ for every $z\in \B(0,\delta)$ and $t\ge 0$.
\end{enumerate}
A domain $\Omega\subset \C^n$ 
is said to be invariant under the positive time flow of $V$ 
if $\phi_t(z)\in \Omega$ for every $z\in \Omega$ and $t\ge 0$. 
Such a domain is sometimes called {\em spirallike} for $V$ 
(see \cite{GrahamAll2008}).
It was shown by Chatterjee and Gorai  
\cite[Theorem 1.1]{ChatterjeeGorai2025X} 
(see also El Kasimi \cite{ElKasimi1988} 
for starshaped domains and Hamada \cite[Theorem 3.1]{Hamada2015} 
for linear vector fields) that a spirallike domain 
$\Omega$ containing the origin is Runge in $\C^n$, that is, the restrictions
of holomorphic polynomials on $\C^n$ to $\Omega$ form
a dense subset of the space $\Oscr(\Omega)$ of holomorphic
functions on $\Omega$. 

In this note we prove the following result.

%
%  MAIN THEOREM
%
\begin{theorem}\label{th:Runge}
Assume that $V$ is a complete holomorphic vector field 
on $\C^n$, $n>1$, with a globally attracting 
fixed point $0\in \C^n$ and the domain $0\in \Omega\subset \C^n$
is invariant under the positive time flow of $V$.
Then, every biholomorphic map from $\Omega$ 
onto a Runge domain in $\C^n$ can be approximated uniformly 
on compacts in $\Omega$ by holomorphic automorphisms of $\C^n$.
\end{theorem}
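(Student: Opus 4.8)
\medskip
The plan is to deduce the statement from the Anders\'en--Lempert theorem, which applies because $\C^n$ has the density property: if $F_t\colon\Omega\to\C^n$ ($t\in[0,1]$) is a $C^1$ isotopy of injective holomorphic maps with $F_0$ the inclusion $\Omega\hookrightarrow\C^n$, and if every image $\Omega_t=F_t(\Omega)$ is Runge in $\C^n$, then $F_1$ is a limit, uniformly on compacts in $\Omega$, of holomorphic automorphisms of $\C^n$. Thus the whole problem reduces to connecting the inclusion of $\Omega$ to the given map $F$ by an isotopy of injective holomorphic maps \emph{all of whose images are Runge}. The two endpoints cause no trouble: $\Omega_0=\Omega$ is Runge because it is spirallike (the cited theorem of Chatterjee--Gorai), and $\Omega_1=F(\Omega)$ is Runge by hypothesis. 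The entire difficulty lies in keeping the intermediate images Runge.

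First I would normalise. Composing $F$ with the translation $w\mapsto w-F(0)$, an automorphism of $\C^n$ that preserves the Runge property and changes neither the hypotheses nor the conclusion, I may assume $F(0)=0$; then $0$ is a common fixed point of $F$ and of the flow. Writing $L=dF_0\in GL_n(\C)$, I would build the isotopy in two stages. Since $GL_n(\C)$ is connected, I choose a path $\tau\mapsto L_\tau$ from the identity to $L$; the maps $L_\tau|_\Omega$ are injective holomorphic, and each image $L_\tau(\Omega)$ is the image of the Runge domain $\Omega$ under a linear automorphism, hence Runge. This connects the inclusion to $L|_\Omega$ through Runge images, so it remains to connect $L$ to $F$.

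For the second stage I would use the attracting flow to zoom in on the fixed point, where $F$ is indistinguishable from its linear part. Because $\Omega$ is invariant under the positive-time flow, $\phi_s(\Omega)\subseteq\Omega$ for $s\ge 0$, so the conjugates
\[
G_s=\phi_{-s}\circ F\circ\phi_s,\qquad s\ge 0,
\]
are well defined and injective on $\Omega$, with $G_0=F$. As $s\to+\infty$ the point $\phi_s(z)$ tends to $0$, where $F(w)=Lw+O(|w|^2)$; rescaling by $\phi_{-s}$ should then recover $L$, so that $G_s$ converges to $L$ locally uniformly, and a reparametrisation $s=s(t)$ turns $\{G_s\}$ into a $C^1$ isotopy between $L$ and $F$. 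Concatenating it with the $GL_n(\C)$ path produces the desired isotopy from the inclusion to $F$.

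The main obstacle is exactly the Runge property of the intermediate images $\Omega_s=G_s(\Omega)=\phi_{-s}\bigl(F(\phi_s(\Omega))\bigr)$. Since $\phi_{-s}\in\Aut(\C^n)$ and holomorphic automorphisms preserve the Runge property (they preserve polynomial hulls, polynomials being dense in $\Oscr(\C^n)$), this reduces to showing that $F(\phi_s(\Omega))$ is Runge. Here I would transport the vector field: $F(\phi_s(\Omega))=\psi_s(\Omega')$, where $\psi_s=F\circ\phi_s\circ F^{-1}$ is the forward flow of $F_\ast V$ on $\Omega'$. This family maps $\Omega'$ into itself and contracts it to the fixed point $F(0)=0$, so $F(\phi_s(\Omega))$ is spirallike and therefore Runge. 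The delicate point is that the transported field $F_\ast V$ is complete only in forward time and only on $\Omega'$, not on all of $\C^n$; one must therefore invoke the implication ``spirallike $\Rightarrow$ Runge'' in a form depending only on the forward semiflow on the domain, and likewise verify the convergence $G_s\to L$ and the $C^1$ matching at the junction. These estimates near the attracting fixed point, governed by the linear part $A=dV_0$ (whose eigenvalues have negative real part), carry the real technical weight of the argument.
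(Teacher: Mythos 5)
The fatal gap is the convergence claim in your second stage: in general $G_s=\phi_{-s}\circ F\circ\phi_s$ does \emph{not} converge to $L=dF(0)$, and need not even stay locally bounded, because conjugation by the expanding maps $\phi_{-s}$ can blow up the nonlinear terms of $F$ whenever the spectrum of $dV(0)$ is spread out or resonant. Concretely, take $n=2$, $V=-z\,\frac{\di}{\di z}-10w\,\frac{\di}{\di w}$ (linear, complete, globally attracting), $\Omega=\B(0,1)$ (invariant under the positive flow), and the shear $F(z,w)=(z,w+z^2)$, an automorphism of $\C^2$, so $F(\Omega)$ is Runge and $L=\Id$. Then $\phi_s(z,w)=(e^{-s}z,e^{-10s}w)$ and $G_s(z,w)=(z,\,w+e^{8s}z^2)$, which diverges locally uniformly as $s\to+\infty$. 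With the resonant choice $V=-z\,\frac{\di}{\di z}-2w\,\frac{\di}{\di w}$ one gets $G_s\equiv F$ for all $s$, so the ``zoom in on the fixed point'' isotopy never leaves $F$. When the limit of $\phi_{-s}\circ F\circ\phi_s$ exists at all, it is the Poincar\'e--Dulac resonant normal-form part of $F$, not its linear part; no estimate near the fixed point can repair this, since the obstruction already occurs for linear $V$ and polynomial $F$.

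The trouble came from insisting that your isotopy start at the inclusion, which forced the conjugation. The result the paper actually invokes, \cite[Theorem 1.1]{ForstnericRosay1993}, is anchored at an \emph{arbitrary} member: if $F_t:\Omega\to\Omega_t$ is an isotopy of biholomorphisms onto Runge domains and some single $F_{t_0}$ is a limit of automorphisms, then all $F_t$ are. So the paper never conjugates; it uses the one-sided isotopy $F_t=F\circ\phi_t$, whose images $F(\phi_t(\Omega))$ are automatically Runge: $\phi_t(\Omega)$ is Runge in $\C^n$ (automorphic image of a Runge domain), hence Runge in $\Omega$; relative Rungeness is biholomorphically invariant, so $F(\phi_t(\Omega))$ is Runge in $\Omega'$, hence in $\C^n$ by transitivity since $\Omega'$ is Runge. (This two-line argument also disposes of the step you correctly flagged as delicate --- no semiflow version of ``spirallike $\Rightarrow$ Runge'' for the pushed-forward field $F_*V$ is needed.) The anchor is obtained by normalizing $DF(0)=\Id$ and choosing $\epsilon>0$ with $F(\B(0,\epsilon))$ convex, so that $F|_{\B(0,\epsilon)}$ is a limit of automorphisms by \cite[Theorem 2.1]{AndersenLempert1992}; when $\overline\Omega$ is compact there is $t_0>0$ with $\phi_{t_0}(\overline\Omega)\subset\B(0,\epsilon)$, making $F_{t_0}=F\circ\phi_{t_0}$ such a limit, and the general case follows by exhausting $\Omega$ by the invariant relatively compact subdomains $\bigcup_{t\ge0}\phi_t(W)$, $W\Subset\Omega$. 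Your Stage 1 (the $GL_n(\C)$ path) is correct but becomes unnecessary once the anchor is placed at $t_0$ instead of at the inclusion.
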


By Anders\'en and Lempert
\cite{AndersenLempert1992}, it follows that a biholomorphic map 
$F:\Omega\to F(\Omega)\subset\C^n$ in Theorem \ref{th:Runge}
can be approximated uniformly on compacts by 
compositions of holomorphic shears and generalized shears.
We refer to \cite[Chapter 4]{Forstneric2017E} 
and \cite{ForstnericKutzschebauch2022} for surveys 
of this theory.

For a starshaped domain $\Omega\subset\C^n$, Theorem 
\ref{th:Runge} coincides with \cite[Theorem 2.1]{AndersenLempert1992}
due to Anders\'en and Lempert.
Theorem \ref{th:Runge} generalizes results of Hamada 
\cite[Theorem 4.2]{Hamada2015} (in which the vector field $V$ is linear) 
and Chatterjee and Gorai \cite[V5, Theorem 1.5]{ChatterjeeGorai2025X}. 
Their results give the same conclusion under additional conditions 
on the flow of $V$, and their proofs (especially the one in 
\cite{ChatterjeeGorai2025X}) are fairly involved. 
The papers \cite{Hamada2015,ChatterjeeGorai2025X} 
include applications to the theory of Loewner 
partial differential equation; see Arosio, Bracci and Wold 
\cite{ArosioBracciWold2013} for the latter.

Here we show that Theorem \ref{th:Runge} is an 
elementary corollary to \cite[Theorem 1.1]{ForstnericRosay1993} 
and no additional conditions on the vector field $V$ are necessary.

We wish to point out that very little seems to be known about 
globally attracting complete nonlinear holomorphic vector fields
on $\C^n$ for $n>1$. 
It was proved by Rebelo \cite{Rebelo1996} that a complete holomorphic 
vector field on $\C^2$ has a nonvanishing two-jet at each fixed point. 
It seems unknown whether such a vector field 
can have more than one fixed point.

\begin{proof}[Proof of Theorem \ref{th:Runge}] 
Let $V$ and $\Omega\subset \C^n$ be as in the theorem.
By \cite[Theorem 1.1]{ChatterjeeGorai2025X}, 
$\Omega$ is Runge in $\C^n$. We shall prove 
that every biholomorphic map $F:\Omega\to \Omega'$ 
onto a Runge domain $\Omega'\subset\C^n$ 
is a limit of holomorphic automorphisms
of $\C^n$, uniformly on compacts in $\Omega$. 

We may assume that $F(0)=0$ and the derivative
$DF(0)$ is the identity map. 
Thus, $F$ is a small perturbation of the identity near the origin.
In particular, choosing $\epsilon >0$ small enough, 
we have that $\B(0,\epsilon)\subset\Omega$ and 
the image $F(\B(0,\epsilon))$ is convex. Hence, the restricted map
$F:\B(0,\epsilon)\to F(\B(0,\epsilon))$ is a limit of  
holomorphic automorphisms by \cite[Theorem 2.1]{AndersenLempert1992}.
Fix such an $\epsilon$. Note that for each $t\ge 0$ the domain 
$\Omega_t:=\phi_t(\Omega)\subset \Omega$
is Runge in $\C^n$ (and hence in $\Omega$) since $\phi_t\in\Aut(\C^n)$. 

Assume first that $\overline \Omega$ is compact.  
Conditions (1) and (2) on the vector field 
$V$ imply that there is a $t_0>0$ such that 
$\phi_{t}(\overline\Omega)\subset \B(0,\epsilon)$ for all $t\ge t_0$.
Indeed, given a point $p\in \overline \Omega$, condition (1) 
gives a number $t(p)>0$ such that $\phi_{t(p)}(p)\in \B(0,\delta)$.
By continuity, there is a neighbourhood $U_p\subset \C^n$
of $p$ such that $\phi_{t(p)}(U_p)\subset \B(0,\delta)$.
This gives a finite open cover $U_1,\ldots,U_m$ of $\overline \Omega$
and numbers $t_1>0,\ldots,t_m>0$ such that 
\begin{equation}\label{eq:delta}
	\phi_{t_j}(U_j)\subset \B(0,\delta)\ \ \text{holds for $j=1,\ldots,m$}.
\end{equation}
Set $t_0=\max\{t_1,\ldots,t_m\}$. By property (2) of the flow 
and \eqref{eq:delta} we have that 
$\phi_t(\overline \Omega)\subset \B(0,\epsilon)$ for all $t\ge t_0$,
which proves the claim.

Recall that $\Omega'=F(\Omega)$. 
Let $\psi_t:\Omega'\to \Omega'$ for $t\ge 0$ be the unique 
holomorphic map which is $F$-conjugate to $\phi_t:\Omega\to\Omega$,
defined by the condition
\[
	F\circ \phi_t = \psi_t \circ F\ \ \text{for all $t\ge 0$}.
\]
Thus, $\psi_t$ maps $\Omega'$ biholomorphically onto the domain
$
	\Omega'_t:=\psi_t(\Omega') =F(\Omega_t)\subset\Omega'
$
for every $t\ge 0$, and $\psi_0$ is the identity on $\Omega'$. 
Since $\Omega_t$ is Runge in $\Omega$ for every $t\ge 0$ 
and the map $F:\Omega\to\Omega'$ is biholomorphic, we infer
that $\Omega'_t$ is Runge in $\Omega'$, 
and hence also in $\C^n$ (since $\Omega'$ is Runge in $\C^n$). 
Consider the family of maps 
\begin{equation}\label{eq:isotopy}
	F_t = F\circ \phi_t : 
	\Omega \stackrel{\cong}{\longrightarrow} \Omega'_t,
	\quad t\ge 0.
\end{equation}
This is an isotopy of biholomorphic maps from the Runge domain $\Omega$
onto the family of Runge domains $\Omega'_t\subset \C^n$, 
with $F_t$ depending smoothly on $t$. Since 
$\overline \Omega_{t_0}= \phi_{t_0}(\overline \Omega)\subset \B(0,\epsilon)$, 
the restricted map $F:\B(0,\epsilon)\to \C^n$ is a limit of 
automorphisms of $\C^n$ and $\phi_{t_0}\in\Aut(\C^n)$,
the map $F_{t_0}=F\circ\phi_{t_0}$ is 
also a limit of automorphisms of $\C^n$. By 
\cite[Theorem 1.1]{ForstnericRosay1993} 
it follows that every map $F_t$ in the isotopy \eqref{eq:isotopy}
is a limit of automorphisms of $\C^n$. In particular, this holds for
the map $F_0=F:\Omega\to\Omega'$.

This proves the theorem in the case when $\overline\Omega$
is compact. The general case follows by observing that 
$\Omega$ is exhausted by relatively compact 
domains $\Omega_0\Subset \Omega$ containing the origin
which are invariant under the positive time flow of $V$. 
To see this, choose an open relatively compact subset $W$ of $\Omega$ 
and set $\Omega_0=\bigcup_{t\ge 0} \phi_t(W)\subset \Omega$.
Obviously, $\Omega_0$ is open and positive time invariant.
Pick $\epsilon>0$ such that $\overline{\B(0,\epsilon)}\subset\Omega$.
We see as before that there is a number $t_0>0$ 
such that $\phi_t(\overline W)\subset \B(0,\epsilon)$ for all $t\ge t_0$.
It follows that 
\[
	\Omega_0 \subset 
	\bigcup_{0\le t\le t_0}\phi_t(\overline W) \cup 
	\overline{\B(0,\epsilon)}.
\]
Since the first set on the right hand side is compact
and contained in $\Omega$, we see that 
$\overline \Omega_0\subset\Omega$.
By \cite[Theorem 1.1]{ChatterjeeGorai2025X}, 
$\Omega_0$ is Runge in $\C^n$, and hence in $\Omega$.
If follows that $F(\Omega_0)=\Omega'_0$ is Runge 
in $\Omega'=F(\Omega)$, and hence also in $\C^n$ 
(since $\Omega'$ is Runge in $\C^n$). The above argument 
in the special case then shows that $F:\Omega_0\to  \Omega'_0$ 
is a limit of holomorphic automorphisms of $\C^n$ uniformly on
compacts in $\Omega$. 
By the construction, $\Omega_0$ can be chosen to contain 
any given compact subset of $\Omega$, which proves the theorem.
\end{proof}

%
% DENSITY PROPERTY
%
The Runge domain $\Omega$ in Theorem \ref{th:Runge}
need not be pseudoconvex. 
Replacing $\C^n$ by a Stein manifold $X$ with the density property
(see Varolin \cite{Varolin1999,Varolin2000} and 
\cite[Section 4.10]{Forstneric2017E}) and assuming 
that $\Omega$ is a pseudoconvex Runge domain in $X$
which is positive time invariant for a complete 
holomorphic vector field $V$ on $X$ with a globally attracting 
fixed point in $\Omega$, the conclusion of Theorem \ref{th:Runge} 
still holds, with the same proof. The relevant version of 
the result on approximation of isotopies of biholomorphic maps
between pseudoconvex Runge domains in $X$ by holomorphic
automorphisms of $X$ is given by
\cite[Theorem 4.10.5]{Forstneric2017E}.
(A recent survey on Stein manifolds with the density property
can be found in \cite[Sect.\ 2]{ForstnericKutzschebauch2022}.) 
However, we do not know any example of a 
Stein manifold with the density property
and with a globally attracting complete holomorphic vector field,
other than the Euclidean spaces $\C^n$, $n>1$.

%
%  DIVERGENCE (FROM MY BOOK)
%
A version of Theorem \ref{th:Runge} also holds for biholomorphic
maps $F:\Omega\to\Omega'$ between certain Runge domains 
in $\C^n$ with coordinates $z_1,\ldots,z_n$ %, $n>1$, 
preserving the holomorphic volume form
\begin{equation}\label{eq:omega}
	\omega=d z_1\wedge \cdots\wedge d z_n,
\end{equation} 
in the sense that $F^*\omega=\omega$. 
Note that $F^*\omega = (JF)\, \omega$
where $JF$ denotes the complex Jacobian determinant of $F$.
Recall that the {\em divergence} of a holomorphic vector field         			
$V$ with respect to $\omega$ is the holomorphic function 
$\div_\omega V$ satisfying the equation
\begin{equation}\label{eq:divergence}
    L_V \omega= d (V\rfloor \omega) + V\rfloor d\omega
    = d (V\rfloor \omega) = \div_\omega V\cdotp \omega,
\end{equation}
where $L_V \omega$ is the Lie derivative of $\omega$
and $V\rfloor \omega$ is the inner product of $V$ and
$\omega$. The first equality is Cartan's formula
(see \cite[Theorem 6.4.8]{AbrahamMarsdenRatiu1988}), 
and we used that $d\omega=0$.
Let $\phi_t$ denote the flow of $V$. 
From \eqref{eq:divergence} we obtain Liouville's formula 
\begin{equation}\label{eq:Liouville}
    \frac{d}{dt}\, \phi_t^*\omega = \phi_t^* (L_V \omega) 
     =  \phi_t^* (\div_\omega V \cdotp \omega).
\end{equation}
Assume now that $\div_\omega V=c\in \C$ is constant. 
This holds in particular for every linear holomorphic vector field on $\C^n$ 
as is seen from the formula
\begin{equation}\label{eq:explicit}
    \div_\omega \biggl( \sum_{j=1}^n a_j(z) \frac{\di}{\di z_j}\biggr) 
    = \sum_{j=1}^n  \frac{\di a_j}{\di z_j}(z).
\end{equation}
In this case, \eqref{eq:Liouville} reads 
$\frac{d}{dt}\, \phi_t^*\omega=c \, \phi_t^*\omega$. Since 
$\phi_0=\Id$, it follows that 
\begin{equation}\label{eq:ec}
	\phi_t^*\omega=e^{ct} \omega\ \ \text{for all $t$}.
\end{equation}
In particular, if $V$ is globally contracting then $\Re c<0$. 
The case $c=0$ corresponds to $\omega$-preserving
vector fields whose flow maps have Jacobian $1$. 
The following result should be compared  
with \cite[V5, Theorem 1.10 (i)]{ChatterjeeGorai2025X}.
As before, $\omega$ is given by \eqref{eq:omega}.

%
%  MAIN THEOREM
%
\begin{theorem}\label{th:Runge-volume}
Let $V$ be a complete holomorphic vector field 
on $\C^n$, $n>1$, with a globally attracting fixed point $0\in \C^n$,  
whose divergence $\div_\omega V=c$ is constant. 
Assume that the domain $0\in \Omega\subset \C^n$
is pseudoconvex, invariant under the positive time flow 
$\{\phi_t\}_{t\ge 0}$ of $V$, it satisfies $H^{n-1}(\Omega,\C)=0$, and 
$\phi_{t_0}(\Omega)\Subset \Omega$ holds for some $t_0>0$. 
Then, every volume preserving biholomorphic map 
of $\Omega$ onto a Runge domain $\Omega'\subset \C^n$ 
can be approximated uniformly on compacts in $\Omega$ 
by volume preserving automorphisms of $\C^n$.
\end{theorem}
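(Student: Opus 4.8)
The plan is to mirror the proof of Theorem~\ref{th:Runge} as closely as possible, replacing the unconstrained approximation results with their volume-preserving counterparts. As before, after composing with an affine volume-preserving map we may assume $F(0)=0$ and $DF(0)=\Id$, so that $F$ is close to the identity near the origin; on a small ball $\B(0,\epsilon)\subset\Omega$ with $F(\B(0,\epsilon))$ convex, the restricted map $F:\B(0,\epsilon)\to F(\B(0,\epsilon))$ should be approximable by \emph{volume-preserving} automorphisms of $\C^n$. The key hypotheses $H^{n-1}(\Omega,\C)=0$ and pseudoconvexity are exactly what is needed to run a volume-preserving Anders\'en--Lempert scheme, and the existence of $t_0>0$ with $\phi_{t_0}(\Omega)\Subset\Omega$ replaces the compactness assumption on $\overline\Omega$ used in the first proof. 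I would set $\Omega_t=\phi_t(\Omega)$ and $\Omega'_t=\psi_t(\Omega')=F(\Omega_t)$, where $\psi_t$ is the $F$-conjugate of $\phi_t$, and consider the isotopy $F_t=F\circ\phi_t:\Omega\to\Omega'_t$ exactly as in \eqref{eq:isotopy}.

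The essential new point is that every map in this isotopy is volume preserving, which is where the constancy of $\div_\omega V=c$ enters. First I would check that the flow maps $\phi_t$ satisfy $\phi_t^*\omega=e^{ct}\omega$ by \eqref{eq:ec}. Since $F$ is assumed to satisfy $F^*\omega=\omega$, we compute
\[
	F_t^*\omega=\phi_t^*F^*\omega=\phi_t^*\omega=e^{ct}\omega,
\]
so that $F_t$ multiplies the volume form by the constant scalar $e^{ct}$. Because $0$ is a globally attracting fixed point, taking $z\to 0$ along a trajectory and using $DF(0)=\Id$ forces $\Re c\le 0$, and in fact the attracting hypothesis is compatible with rescaling so that one may renormalize to the genuinely volume-preserving case. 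The cleanest route is to observe that each $F_t$, after multiplying the target form by the constant $e^{ct}$, is a biholomorphism intertwining $\omega$ with a constant multiple of $\omega$; composing with a linear dilation $z\mapsto \lambda z$ of appropriate Jacobian converts $F_t$ into an honestly $\omega$-preserving map between Runge domains, so it suffices to approximate the latter by volume-preserving automorphisms.

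With volume-preservation of the isotopy in hand, the approximation proceeds by the volume-preserving analogue of \cite[Theorem 1.1]{ForstnericRosay1993}. Concretely, since $\overline{\Omega_{t_0}}=\phi_{t_0}(\overline\Omega)$ lies in $\B(0,\epsilon)$ once $t_0$ is enlarged using conditions (1)--(2) as in the proof of Theorem~\ref{th:Runge}, and since $\phi_{t_0}$ is itself a volume-scaling automorphism of $\C^n$, the map $F_{t_0}=F\circ\phi_{t_0}$ is a limit of volume-preserving automorphisms. One then applies the parametric, volume-preserving Anders\'en--Lempert theorem to the isotopy $\{F_t\}_{0\le t\le t_0}$ to pull this approximation back to $t=0$, yielding $F_0=F$ as a limit of volume-preserving automorphisms of $\C^n$, uniformly on compacts. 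The hypotheses $H^{n-1}(\Omega,\C)=0$ and pseudoconvexity guarantee that the relevant divergence-free vector fields integrate to volume-preserving flows and that the cohomological obstruction to expressing a divergence-zero field as a sum of completes vanishes.

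The main obstacle I anticipate is precisely this last cohomological input: in the volume-preserving category the Anders\'en--Lempert machinery requires that divergence-free holomorphic vector fields on $\Omega$ be approximable by Lie combinations of \emph{complete} divergence-free fields, and this is where $H^{n-1}(\Omega,\C)=0$ is indispensable (it ensures every closed $(V\rfloor\omega)$ is exact, hence every divergence-free field arises from a Hamiltonian-type potential). A secondary subtlety is the bookkeeping with the scalar $e^{ct}$: one must verify that the dilation used to renormalize does not destroy the Runge property of the target domains and that the whole renormalized isotopy stays within the volume-preserving approximation framework. Both of these are handled by invoking the volume-preserving version of the isotopy approximation theorem (the analogue of \cite[Theorem 1.1]{ForstnericRosay1993}, or \cite[Theorem 4.10.5]{Forstneric2017E} in the density-property setting) rather than by hand, so the proof should again reduce to a short deduction from an existing approximation result.
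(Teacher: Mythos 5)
Your proposal is essentially the paper's own proof: the $t$-dependent renormalizing dilation $z\mapsto e^{-ct/n}z$ that you invoke is exactly the flow $\psi_t$ of the linear field $W=\frac{-c}{n}\sum_{j=1}^n z_j\frac{\di}{\di z_j}$ which the paper composes to form the isotopy $F_t=\psi_t\circ F\circ\phi_t$ with $JF_t=1$, after which the volume-preserving part of \cite[Theorem 1.1]{ForstnericRosay1993} (with the Erratum's hypothesis $H^{n-1}(\Omega,\C)=0$) finishes the argument as in Theorem \ref{th:Runge}. Only two small slips to correct: the anchor at $t=t_0$ must be the renormalized map $\psi_{t_0}\circ F\circ\phi_{t_0}$ rather than $F\circ\phi_{t_0}$, whose Jacobian is $e^{ct_0}\neq 1$ (approximate $F$ on $\B(0,\epsilon)$ by volume-preserving automorphisms $\alpha_k$ and note that $\psi_{t_0}\circ\alpha_k\circ\phi_{t_0}$ has Jacobian $e^{-ct_0}\cdot 1\cdot e^{ct_0}=1$), and global attraction in fact forces $\Re c<0$, not merely $\Re c\le 0$, though the construction of $W$ never uses this.
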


By Anders\'en \cite{Andersen1990}, 
every volume preserving holomorphic automorphism of $\C^n$
is a locally uniform limit of compositions of shears.

\begin{proof}
Since $V$ is globally attracting, we have that $\Re c<0$.
Let $W= \frac{-c}{n} \sum_{j=1}^n z_j \frac{\di}{\di z_j}$.
From \eqref{eq:explicit} we see that $\div_\omega W =-c$. 
The flow $\psi_t$ of $W$ is complete on $\C^n$ and satisfies 
\begin{equation} \label{eq:e-c}
	\psi_t^*\omega=e^{-ct}\omega\ \ \text{for all $t\in\C$}.  
\end{equation}
(Compare with \eqref{eq:ec}.) 
Consider the family of injective holomorphic maps 
\[
	F_t := \psi_t \circ F \circ \phi_t:
	\Omega\to F_t(\Omega)\subset\C^n,
	\quad t\ge 0.
\]
Note that $F_0=F:\Omega\to\Omega'$. Since 
$JF=1$, it follows from \eqref{eq:ec} and \eqref{eq:e-c} 
that $JF_t=1$ for all $t\ge 0$. The conclusion now follows by 
the same argument as in the proof 
of Theorem \ref{th:Runge}, using the second part
of \cite[Theorem 1.1]{ForstnericRosay1993} on 
approximation of isotopies of volume preserving biholomorphic 
maps by volume preserving automorphisms of $\C^n$.
(See the Erratum to \cite{ForstnericRosay1993} 
concerning the condition $H^{n-1}(\Omega,\C)=0$.)
\end{proof}

%
%  Remark on symplectic fields
%
\begin{remark}
(A) Theorem \ref{th:Runge-volume} can be generalized to 
Stein manifolds $(X,\omega)$ having the volume density property; 
see \cite{Varolin2000}, \cite{ForstnericKutzschebauch2022},
and \cite[Sect.\ 4.10]{Forstneric2017E} for this topic.

(B) Chatterjee and Gorai stated an analogue of   
\cite[V5, Theorem 1.5]{ChatterjeeGorai2025X} for holomorphic
vector fields on $\C^{2n}$ with coordinates 
$(z_1,\ldots,z_n,w_1,\ldots,w_n)$ preserving the 
holomorphic symplectic form $\omega=\sum_{j=1}^n dz_j\wedge dw_j$ 
\cite[V5, Theorem 1.10 (ii)]{ChatterjeeGorai2025X}. 
Note however that such a vector field also preserves
the volume form $\omega^n$, 
so it does not have any attracting fixed points.
\end{remark}

%
% 	ACKNOWLEDGEMENTS
%
\smallskip
\noindent {\bf Acknowledgements.} 
Forstneri\v c is supported by the European Union 
(ERC Advanced grant HPDR, 101053085) 
and grants P1-0291 and N1-0237 from ARIS, Republic of Slovenia.

%%%%%%%%%%
%%%%%%%%%%
%%%%%%%%%%
%%%%%%%%%%   THE BIBLIOGRAPHY
%%%%%%%%%%
%%%%%%%%%%

%{\bibliographystyle{abbrv} \bibliography{references}} 
%\begin{comment}

%\end{comment}

\end{document}